\begin{document}

\begin{frontmatter}

\title{Quartic monoid surfaces with maximum number of lines}

\author{Mauro Carlo Beltrametti\thanksref{GNS}}
  \address{Dipartimento di Matematica, Universit\`a di Genova, 
Via Dodecaneso, 35, 16146 Genova, Italy}
\ead{beltrame@dima.unige.it}
\author{Alessandro Logar\thanksref{FRA}}
\address{Dipartimento di Matematica e Geoscienze,
  Universit\`a degli Studi di Trieste, Via Valerio 12/1, 34127 Trieste, Italy.}
\ead{logar@units.it}
\author{Maria Laura Torrente}
  \address{Dipartimento di Economia, Universit\`a di Genova, 
Via Vivaldi 5, 16146 Genova, Italy}
\ead{marialaura.torrente@economia.unige.it}

\thanks[GNS]{Partially supported by project G.N.S.A.G.A. 2016, 2017
Geometria algebrica e algebra commutativa}
\thanks[FRA]{Partially supported by the FRA 2018 grant
  ``Aspetti geometrici, topologici e computazionali delle variet\`{a}'',
    Universit\`{a} di Trieste}

\begin{abstract}
  In 1884 the German mathematician Karl Rohn published a substantial
  paper on \cite{ROH} on
  the properties of quartic surfaces with triple points,
  proving (among many other things) that the maximum number of
  lines contained in a quartic monoid surface is $31$.

  In this paper we study in details this class of surfaces. We prove
  that there exists an open subset $A \subseteq \mathbb{P}^1_K$
  ($K$ is a characteristic zero field) that parametrizes (up to a
  projectivity) all the quartic monoid surfaces with $31$ lines;
  then we study the action of $\mathrm{PGL}(4,K)$ on these surfaces,
  we show that the stabiliser of each of them is a group isomorphic
  to $S_3$ except for one surface of the family, whose stabiliser
  is a group isomorphic to $S_3 \times C_3$. Finally we show that 
  the $j$-invariant allows one to decide, also in this situation,
  when two elements of $A$ give the same surface up to a projectivity.
  
  To get our results, several computational tools, available
  in computer algebra systems, are used.
\end{abstract}

\end{frontmatter}

\section{Introduction}
Algebraic quartic surfaces are a classical subject of algebraic geometry
and the study of their rich properties has been developed in research
papers and books since the XIX century. Many
different classifications for several classes of quartic surfaces
where introduced (for instance, see~\cite{ROH}, the books~\cite{es}
and~\cite{cn}; for a more complete discussion,
see~\cite{gas},~\cite{pbpt} and the references given there). 

In more recent years, many classical results have been reconsidered
and presented in a modern language. In particular, a big effort 
has been dedicated to
study the possible singularities on quartic surfaces \cite{Deg},
the number of lines contained in quartic surfaces \cite{gas},
and to study as well the characteristics of monoid surfaces,
as in~\cite{tk1},~\cite{tk2}, where an explicit
description of them in terms of equations can be found
(see also~\cite{jp} and~\cite{bcgm}). 
Unlike the well-known case of smooth cubic surfaces, which contain
$27$ lines, the generic quartic surface does not contain lines.
However there are classes of quartic
surfaces which do contain lines, and it has been
shown that the maximum number is
$64$ (see the paper \cite{gas} and its references).

An interesting class of quartic surfaces is given by the quartic monoid
surfaces, that is, surfaces with a triple point. Their 
classification in terms of other singularities is given in \cite{tk1} and
\cite{tk2}, while in the forthcoming paper \cite{blt}, it is described
a classification according to the possible configuration of lines they
can contain. In particular, it is shown that a quartic monoid surface
can contain at most $31$ lines; indeed such a result was already obtained by
Rohn in a paper published in 1884 (see \cite{ROH}).

In this paper we aim to study into details the class of quartic monoid
surfaces with $31$ lines. In particular, we want to describe the natural
action of the group $\mathrm{PGL}(4, K)$ on these surfaces, which means
that we are interested in quartic monoid surfaces up to projectivity.
We show that quartic monoid surfaces with $31$ lines
can be parametrized by an open subset $A$
of $\mathbb{P}^1_K$ and, moreover, that for any given points
$a, b \in A$
the corresponing surfaces $Q(a)$ and $Q(b)$ are projectively equivalent
if and only if $a$ and $b$ have the same $j$-invariant.
Furthermore, if $a\in A$ is not a primitive root of $-1$, then the stabiliser
of the corresponding surface $Q(a)$ is a group isomorphic to the
group of permutations $S_3$ of three elements
(which does not depend on the parameter $a$),
while if $a$ is a primitive root of $-1$, then the stabiliser of the
corresponding surface is a group with $18$ elements, isomorphic
to $S_3 \times C_3$, where $C_3$ denotes the cyclic group of order $3$.
If the parameter $a$ is rational, then all the
$31$ lines of $Q(a)$ are rational (i.e. obtained by joining
two points with rational coordinates). 
The approach we have followed is quite constructive so that we get (or we
can easily obtain) the explicit equations of all the considered
geometric objects. We have therefore intensively used symbolic
computation tools, and precisely the computer
algebra systems CoCoA~\cite{CoCoA-5} and Sage~\cite{sage}.

\section{Basic properties}
Let $K$ be a characteristic zero field. 
By a \emph{quartic monoid of $\mathbb{P}_K^3$} we mean a quartic surface
of $\mathbb{P}_K^3$ which has a triple point (that, w.l.o.g.\ can be assumed
the origin $O=(0, 0, 0, 1)$). Hence the polynomial defining a quartic
monoid is of  the form:
\begin{equation}
Q(x, y, z, t) = t F_3(x, y, z) + F_4(x, y, z)
\end{equation}
where $F_3, F_4 \in K[x, y, z]$ are two forms of degree
three and four, respectively. From now on, for shortness,
we also denote by $Q$ 
the quartic surface of equation $ Q(x, y, z, t)=0$. 

Let $\pi$ be the plane of $\mathbb{P}^3_K$ of equation
$t=0$. We consider on this plane the two curves of equation $F_3=0$
and $F_4=0$, which intersect in $12$ points $P_0, \dots, P_{11}$ on $\pi$.
The quartic
monoid $Q$ contains the $12$ lines given by $O+P_i$. Moreover, if a line
$r$ is contained in the surface $Q$ and does not pass through the origin,
then its projection from $O$ to the plane $\pi$ gives a line 
joining three of the points $P_i$'s, see \cite{gas} and \cite{ROH}
(indeed, the intersection
$\{r+O\} \cap Q$ is a quartic planar curve which splits into $r$
and three other lines through the origin, which represent
the three possible collinearities
of three points of $\pi$). This remark has
an immediate consequence: the number of lines a quartic monoid can contain
depends on the collinearities of 12 points of the plane. In particular,
Rohn claimed in~\cite{ROH} that the maximum number of collinear triplets of
points among $12$ points in the plane is $19$,  so that the maximum number of
lines contained in a quartic monoid is $31$ ($19$ from the collinearities,
$12$ from the lines through the singular point). He also gave an explicit
(very elegant) equation of a quartic
surface with $31$ lines, which can be expressed (according to
the formulation of~\cite{gas}) in the form:
\begin{equation}
t\left((x+y+z)^3 +xyz\right)+(x+y+z)(x-y)(y-z)(z-x)
\end{equation}

In the present paper, we want to study into details the quartic monoid
surfaces with maximum number of lines, with particular effort to describe
their possible symmetries.

First of all, the combinatorial problem of finding the maximum number
of collinear triplets of $12$ points of the plane can easily be solved with
the help of a computer. The solution we get (unique, up to permutations
of the labels) is the following (see also Figure~\ref{figA}) (where,
from now on, $(i, j, k)$ is a shortcut to denote the triplet $(P_i, P_j, P_k)$):
\begin{equation}
\begin{array}{l}
(0, 1, 2), (0, 3, 4), (0, 5, 6), (0, 7, 8), (0, 9, 10), (1, 3, 7), (1, 4, 5),\\
(1, 6, 8), (1, 10, 11), (2, 3, 5), (2, 6, 7), (2, 9, 11), (3, 6, 10),\\
(3, 8, 11), (4, 6, 9), (4, 7, 11), (4, 8, 10), (5, 7, 10), (5, 8, 9)
\end{array}
\label{19all}
\end{equation}

\begin{figure}
  \begin{center}
  \includegraphics[height=7cm]{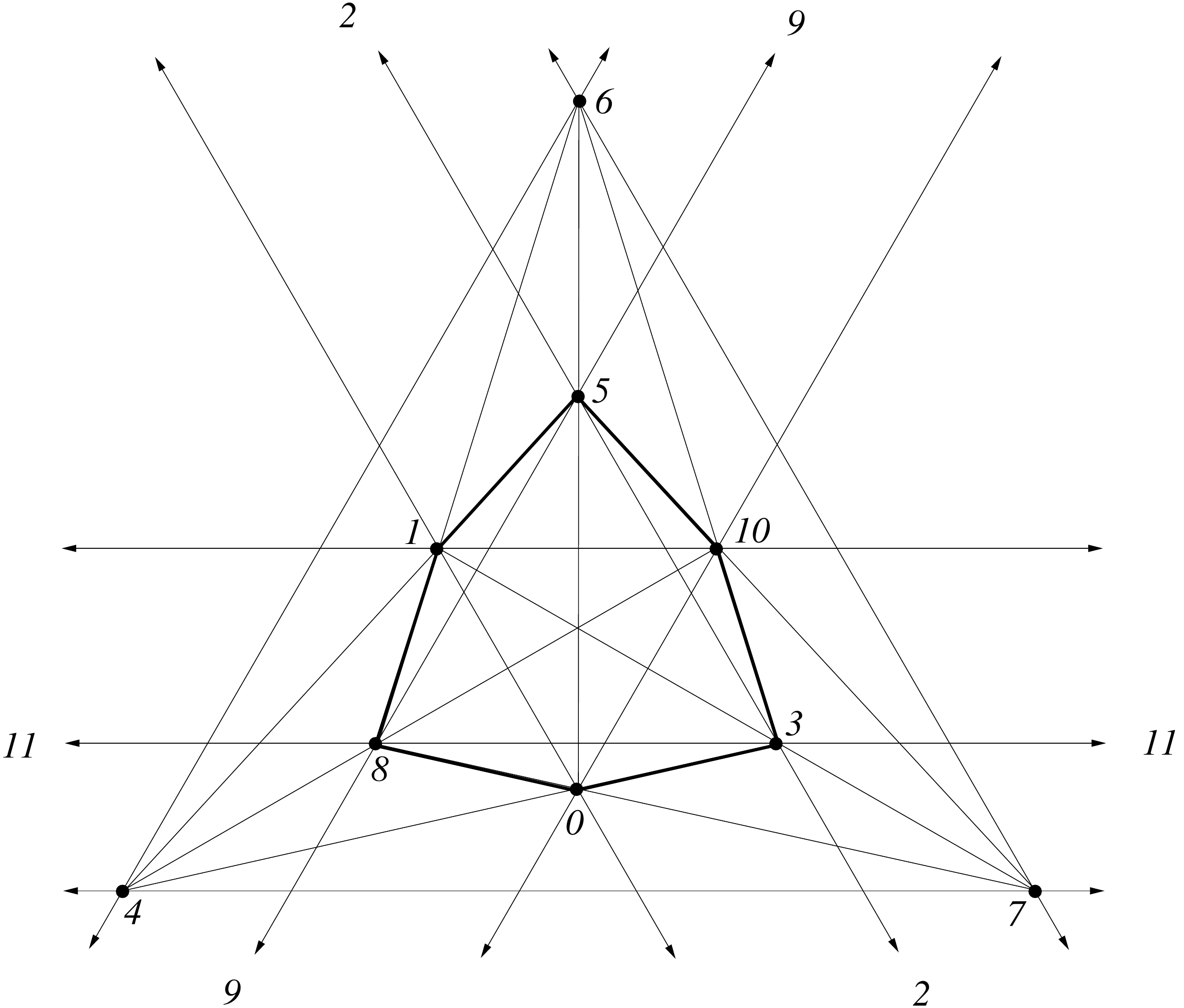}
  \end{center}
  \caption{\small The $19$ collinearities of the
    points described by (\ref{19all}).
    They can be realized starting from 
    the $15$ diagonals of an hexagon (of vertices $0, 3, 10, 5, 1, 8$)
    that meet in the further points $4, 6, 7$ and  $2, 9, 11$
    (these last points lie on the line at infinity).}
\label{figA}
\end{figure}

In order to make computations, it is necessary to assign coordinates to
the points. 
Up to a projectivity of the plane $\pi$, we can assume that 
the coordinates $(x, y, z, t)$ of the points $P_0$, $P_1$, $P_2$, $P_3$, $P_4$
are the following:
\[
P_0,P_1, P_2, P_3, P_4 = (0, 0, 1, 0), (1, 0, 1, 0), (2, 0, 1, 0), 
(0, 1, 1, 0), (0, 2, 1, 0)
\]
The remaining points have to satisfy two constraints: they must satisfy
conditions~(\ref{19all}) and they have to lie on a plane cubic curve. 
Consequently, the only possible coordinates of the remaining points
turn out to be expressed in terms of  a free parameter $a \in K$ as follows:
\[
\begin{array}{l}
P_5, P_6, P_7 = (1, 1, 3/2, 0), (1, 1, 1/2a + 2, 0), (a + 1, 1, a + 2, 0),\\
P_8, P_9 =  (a + 1, 1, 3/2a + 2, 0), (1, -a + 1, 2, 0),\\
P_{10}, P_{11} = (1, -a + 1, -1/2a + 2, 0), (a + 1, -a + 1, 1/2a + 2, 0)
\end{array}
\]

An easy computation shows the following fact.
\begin{lem}
  The points $P_0, \dots, P_{11}$ are all distinct if and only if $a\not= -1$
  or $a \not= 0$. If the $12$ points are distinct, then they satisfy  the
  collinearities conditions  given by~(\ref{19all}) and no more if and only if
   $a\not= 1/2$, $a\not= 1$ or $a\not= 2$.
\end{lem}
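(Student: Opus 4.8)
The plan is to reduce everything to elementary linear algebra in the plane $\pi\cong\mathbb{P}^2_K$ (all twelve points lie on $t=0$) and then to a finite collection of polynomial computations in the single variable $a$. Throughout I drop the last coordinate and regard $P_i$ as a point of $\mathbb{P}^2_K$ with homogeneous coordinates $(x_i,y_i,z_i)$.

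\textbf{Distinctness.} Two of the points coincide for a given $a$ exactly when the three $2\times 2$ minors of the matrix $\left(\begin{smallmatrix} x_i & y_i & z_i \\ x_j & y_j & z_j \end{smallmatrix}\right)$ vanish simultaneously. The six points $P_0,\dots,P_5$ have coordinates independent of $a$ and are visibly pairwise distinct, so I would record this first. For the remaining pairs each minor is a polynomial in $a$, and I would run through all $\binom{12}{2}=66$ pairs, in each case intersecting the vanishing loci of the three minors. Carrying this out shows that a coincidence occurs only for $a=-1$ (where $P_5=P_6$) and for $a=0$ (where $P_6,\dots,P_{11}$ all collapse to the single point $(1,1,2)$). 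Hence the twelve points are distinct if and only if $a\neq-1$ and $a\neq 0$, which is the first assertion.

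\textbf{Collinearities.} Three points $P_i,P_j,P_k$ are collinear precisely when the $3\times 3$ determinant $\Delta_{ijk}(a)$ of their coordinate vectors vanishes, and each such determinant is a polynomial in $a$. The argument then splits into three computations. First, for the nineteen triples listed in~(\ref{19all}) a direct check confirms $\Delta_{ijk}(a)\equiv 0$; since the remaining points' coordinates were derived precisely by imposing these relations, this is immediate and establishes that the collinearities of~(\ref{19all}) always hold. Second, for each of the remaining $\binom{12}{3}-19=201$ triples I would compute $\Delta_{ijk}(a)$, verify that it is \emph{not} the zero polynomial (otherwise an unlisted collinearity would hold for all $a$), factor it, and read off its roots. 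The claim to be confirmed is that, once the non-distinctness values $a=0,-1$ are discarded, the union of all these roots equals exactly $\{1/2,1,2\}$; equivalently, for $a\notin\{0,-1,1/2,1,2\}$ no unlisted triple is collinear. Third, for the converse I would exhibit, for each of $a=1/2,1,2$, an explicit extra collinearity---for instance at $a=1$ one has $1-a=0$, so $P_9,P_{10},P_{11}$ acquire $y$-coordinate $0$ and fall on the line $y=0$ together with $P_0,P_1,P_2$; of the twenty collinear triples among these six points only $(0,1,2),(0,9,10),(1,10,11),(2,9,11)$ appear in~(\ref{19all}), so genuinely new collinearities arise. Combining the three computations gives the second assertion.

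\textbf{Main obstacle.} None of the individual steps is conceptually difficult---every condition is the vanishing of a determinant or a minor, hence a polynomial in $a$---but the bookkeeping is substantial: $66$ pairs and $220$ triples must be examined, and the decisive ``and no more'' direction requires factoring all $201$ unlisted determinants and checking that no root other than $1/2,1,2$ survives in the distinctness locus. This is exactly the kind of finite but large verification best delegated to a computer algebra system such as CoCoA or Sage; the only real care needed is to organize the output so that the common roots $1/2,1,2$ (and the degenerate values $0,-1$) are isolated cleanly.
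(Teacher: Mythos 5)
Your proposal is correct and matches the paper's approach: the paper offers no written argument beyond ``An easy computation shows the following fact,'' i.e.\ exactly the finite determinant/minor verification over all pairs and triples (delegated to a computer algebra system) that you lay out, and your concrete checks (e.g.\ $P_5=P_6$ at $a=-1$, the collapse of $P_6,\dots,P_{11}$ at $a=0$, and the six points on $y=0$ at $a=1$) are accurate.
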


We denote by $D$ the set $\{-1, 0, 1/2, 1, 2 \}\subseteq K$ corresponding to
the degenerate cases of the points. 

For a given $a\in K\setminus D$, the cubic curve $C_3(a)$ passing through
the $12$ points $P_0, \dots, P_{11}$ has equation $F_3(a) = 0$, where:
\begin{eqnarray}
  F_3(a) & = & (a - 1)x^3+(2a^2 + 6a - 1)x^2y-3(a-1)x^2z+(a^2 + 4a + 1)xy^2\\
  \nonumber
  & & +  2(a - 1)xz^2 -2a(a+4)xyz+(a + 1)y^3-3(a+1)y^2z+2(a+1)yz^2\nonumber
\end{eqnarray}

In order to construct the quartic monoids, we need quartic curves
of the plane $\pi$  passing
through the $12$ points. For a fixed $a\in K \setminus D$,
let $\mathcal{L}_a$ be the linear system of quartic curves through
the $12$ points.

\begin{lem}
  The linear system $\mathcal{L}_a$ of quartic curves through the $12$
  points $P_0, \dots, P_{11}$ has dimension $4$. If $\ell_1$, $\ell_2$, $\ell_3$
  are three fixed, generic lines in the plane and if $r_1$, $r_2$, $r_3$, $r_4$
  are the four lines passing through the triplets of collinear points
  $(P_0, P_1, P_2), $ $(P_3, P_6, P_{10}), $ $(P_4, P_7, P_{11}), $
  $(P_5, P_8, P_9)$, then a basis for the
  linear system $\mathcal{L}_a$ is given by the four quartics:
  \begin{equation}
  C_3(a)+\ell_1, \quad C_3(a)+\ell_2, \quad C_3(a)+\ell_3,
  \quad r_1+r_2+r_3+r_4
  \label{4quart}
  \end{equation}
\end{lem}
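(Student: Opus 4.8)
The plan is to recognize the twelve points as a complete intersection and to exploit the resulting saturation of their ideal. First I would read the displayed quartics as divisors: $C_3(a)+\ell_i$ is the quartic $F_3(a)\,\ell_i=0$, and $r_1+r_2+r_3+r_4$ is the quartic $r_1r_2r_3r_4=0$. Each $F_3(a)\,\ell_i$ vanishes at all twelve points, since these lie on the cubic $C_3(a)=V(F_3(a))$. For the last quartic I would check that the four triplets $(0,1,2)$, $(3,6,10)$, $(4,7,11)$, $(5,8,9)$ occurring in~(\ref{19all}) partition $\{0,\dots,11\}$; hence every $P_i$ lies on exactly one of the lines $r_1,\dots,r_4$, so $r_1r_2r_3r_4$ vanishes at all twelve points as well. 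Thus all four quartics belong to $\mathcal{L}_a$, which already gives the inclusion of the claimed span into the linear system.

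Next I would show that the twelve points form the complete intersection $V(F_3(a))\cap V(r_1r_2r_3r_4)$. By B\'ezout each line $r_j$ meets the cubic $C_3(a)$ in exactly three points, and the three points of its triplet already lie on the cubic, so $r_j\cap C_3(a)$ is precisely that triplet; running over $j=1,\dots,4$ and using that the triplets partition the points, the scheme $V(F_3(a),\,r_1r_2r_3r_4)$ is the reduced set of twelve points (here I use that the $P_i$ are distinct, i.e.\ $a\in K\setminus D$, by the first lemma). Being a complete intersection of a cubic and a quartic with no common component, this scheme is arithmetically Cohen--Macaulay, so its defining ideal $(F_3(a),\,r_1r_2r_3r_4)$ is saturated and hence equals the homogeneous ideal of the twelve points. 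Consequently the space of quartics through the points is exactly the degree-four piece $(F_3(a),\,r_1r_2r_3r_4)_4 = F_3(a)\cdot(\text{linear forms}) + \langle r_1r_2r_3r_4\rangle$.

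It then remains to compute this dimension and read off the basis. The injection $\ell\mapsto F_3(a)\,\ell$ shows $F_3(a)\cdot(\text{linear forms})$ has dimension $3$, and for three linearly independent (generic) forms $\ell_1,\ell_2,\ell_3$ it is spanned by $F_3(a)\ell_1,F_3(a)\ell_2,F_3(a)\ell_3$. Moreover $r_1r_2r_3r_4$ is not a multiple of $F_3(a)$: otherwise, being a product of four linear forms, it would force $F_3(a)$ to equal (up to a constant) a product of three of the lines $r_j$, so that $C_3(a)$ would be the union of only three of the four lines and would meet only nine of the twelve points, contradicting the fact above that each $r_j$ meets $C_3(a)$ exactly in its own triplet. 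Hence the sum is direct, $\dim\mathcal{L}_a=3+1=4$, and the four displayed quartics form a basis.

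The main obstacle I anticipate is the upper bound $\dim\mathcal{L}_a\le 4$, i.e.\ ruling out a fifth independent quartic; the elementary argument only yields $\dim\mathcal{L}_a\ge 4$. The complete-intersection/saturation step is what pins the dimension to exactly $4$, so the crux is verifying carefully that the twelve points genuinely form a reduced complete intersection for \emph{every} $a\in K\setminus D$ (distinctness from the first lemma, and the absence of a common component between $C_3(a)$ and the four lines). Alternatively, the same bound can be obtained computationally by checking that the $12\times 15$ matrix obtained by evaluating a monomial basis of quartics at $P_0,\dots,P_{11}$ has rank $11$ for generic $a$, the kind of symbolic computation the paper relies on.
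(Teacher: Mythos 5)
Your proposal takes a genuinely different route from the paper's. The paper's own proof is purely computational: it evaluates the $15$ quartic monomials at the $12$ points and shows that the resulting $12\times 15$ matrix has rank $11$ for every admissible $a$ (using the six points $P_0,\dots,P_5$ with constant coordinates to reduce to a $6\times 9$ matrix, all of whose $6\times 6$ minors vanish identically and whose $5\times 5$ minors vanish only at $a=0$); the four quartics of~(\ref{4quart}) lie in $\mathcal{L}_a$ and are visibly independent, hence form a basis. You replace the rank computation by structure: the $12$ points are the complete intersection $V(F_3(a))\cap V(r_1r_2r_3r_4)$, so by unmixedness the ideal $(F_3(a),\,r_1r_2r_3r_4)$ is saturated and equals the homogeneous ideal of the points, whence the degree-four piece is $F_3(a)\cdot\langle x,y,z\rangle\oplus\langle r_1r_2r_3r_4\rangle$, of dimension $3+1=4$. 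This is conceptually cleaner: it explains \emph{why} the dimension is $4$, works uniformly in $a$ with no minor computations, and yields as a byproduct that $C_3(a)$ is the unique cubic through the $12$ points. The paper's computation, in exchange, needs no geometric input and identifies exactly which parameter values are degenerate.

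There is, however, one step you rely on throughout but never prove: that no line $r_j$ is a component of $C_3(a)$, i.e.\ that $F_3(a)$ and $r_1r_2r_3r_4$ have no common factor. Your use of B\'ezout and the complete-intersection/Cohen--Macaulay claim both presuppose this, and you yourself flag it as ``the crux'' without closing it; as written this is a gap. It can be closed using only the preceding lemma. For $a\notin D$ the collinearities among the points are exactly the $19$ triplets of~(\ref{19all}); since no pair of indices occurs in two of those triplets, no four of the points are collinear. Suppose now $r_j\subseteq C_3(a)$ and write $C_3(a)=r_j\cup C_2$ with $C_2$ a conic. The nine points outside the triplet of $r_j$ cannot lie on $r_j$ (else four points would be collinear), so they all lie on $C_2$; among these nine points there is another collinear triplet of~(\ref{19all}), and a conic containing three collinear points must contain their line, so $C_2$ splits as that line plus a residual line; the remaining six points, lying on neither of the first two lines, would then all lie on the residual line, giving six collinear points --- a contradiction. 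With this paragraph inserted, your proof is complete. One last caution: your fallback remark that the evaluation matrix has rank $11$ ``for generic $a$'' would not suffice, since the lemma is asserted for every $a\in K\setminus D$; that is precisely why the paper tracks which minors vanish at which values of $a$.
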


\begin{proof}
  To compute the dimension of the linear system, it is enough to take a generic
  quartic curve of $\mathbb{P}^3$ and to impose that it passes through the
  $12$ points. The computation of the rank of the $12\times 15$
  matrix associated to the
  system, gives the dimension of $\mathcal{L}_a$. In order to speed up the
  computation, it is convenient to observe that the points $P_0, \dots, P_5$
  have fixed coordinates. Therefore, to find the dimension of $\mathcal{L}_a$
  we can solve six of the equations in terms of the others, hence it
  suffices to find the rank of a $6\times 9$ matrix (whose entries are
  polynomials in $a$). It turns out that all the maximal minors are zero,
  while the $5 \times 5$ minors are zero if and only if $a=0$, which is
  excluded by our hypothesis. Finally, all the four quartic curves
  of~(\ref{4quart}), contain the $12$ points and they 
  are clearly linearly independent. 
\end{proof}
The equation of the quartic curve which splits into the lines
$r_1, \dots, r_4$ is:
\begin{eqnarray*} 
F_4(a) &=& y(3ax - 2az + x - y)(2ax + ay - 2az + 3x + y - 2z)\\ 
& & (ax + 2x + 2y - 2z)
\end{eqnarray*}
Thus, as a consequence of the previous lemma one sees  that all
the quartic monoids with $31$ lines (up to a change of coordinates) have
equation:
\[
 tF_3(a) + (\alpha_0x+\alpha_1y+\alpha_2z)F_3(a) + bF_4(a) = 0
\]
where $a$, $\alpha_0$, $\alpha_1$, $\alpha_2, b$ are parameters in $K$. If
we change the coordinates writing $t-\alpha_0x-\alpha_1y-\alpha_2z$
in place of $t$, we finally get that, up to a change of coordinates,
all the quartic monoids with $31$ lines have equation $Q(a, b) = 0$,
where:
\begin{equation}
  Q(a, b) = tF_3(a) + b F_4(a)
  \label{Qab}
\end{equation}
with $a \in  K \setminus D$, $b \not = 0$. Moreover,  starting from the
same given points $P_0, \dots, P_{11}$, different values of
$b$ give different quartic monoids. 

We conclude this section with the following proposition, which shows
that the surface~(\ref{Qab}) does not have singular
points (apart from the origin).

\begin{prop}
  If $a \not\in D$ and $b \not= 0$, then the only singular point
  of the quartic surface~(\ref{Qab}) is the origin. 
\end{prop}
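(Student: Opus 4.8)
The plan is to locate the singular points as the common zeros of the four partial derivatives of $Q(a,b)=tF_3(a)+bF_4(a)$, and to show that every such point other than the vertex $O=(0,0,0,1)$ is forced to project onto one of the twelve base points $P_i$, where a transversality argument produces a contradiction. First I would record the partials: since $F_3$ and $F_4$ depend only on $x,y,z$, one has $\partial Q/\partial t = F_3$, so every singular point satisfies $F_3=0$. The remaining three equations are $t\,\partial_\bullet F_3 + b\,\partial_\bullet F_4 = 0$ for $\bullet\in\{x,y,z\}$. Multiplying these by $x$, $y$, $z$ respectively, summing, and invoking Euler's identity $x\,\partial_x F_d + y\,\partial_y F_d + z\,\partial_z F_d = d\,F_d$ for a form $F_d$ of degree $d$, I obtain $3tF_3 + 4bF_4 = 0$ at the point; together with $F_3=0$ and $b\neq 0$ this forces $F_4=0$. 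Hence a singular point either has $(x,y,z)=(0,0,0)$, i.e.\ it is the vertex $O$, or its image $\bar P=(x:y:z)$ in $\pi$ is a common zero of $F_3(a)$ and $F_4(a)$.

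Next I would identify these common zeros. For $a\notin D$ the cubic $F_3(a)$ and the quartic $F_4(a)$ have no common component (a short check shows $F_3(a)$ is divisible by none of the four linear factors of $F_4(a)$ when $a\notin D$; e.g.\ $F_3(a)|_{y=0}=(a-1)x(x-z)(x-2z)\not\equiv 0$), so by B\'ezout their intersection is a scheme of length $12$, which by the first lemma consists of the $12$ \emph{distinct} points $P_0,\dots,P_{11}$. Consequently the intersection is transverse at each $P_i$, which means exactly that the gradients $\nabla F_3(P_i)$ and $\nabla F_4(P_i)$ (in the variables $x,y,z$) are linearly independent. A singular point projecting to $P_i$ lies on the line $OP_i$ and, being distinct from $O$, has the form $(x_i,y_i,z_i,t_0)$ for some $t_0\in K$; the three remaining equations then read $t_0\,\nabla F_3(P_i) + b\,\nabla F_4(P_i) = 0$. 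Since $b\neq 0$ this is a nontrivial linear relation between the two gradients, contradicting their linear independence. Hence no singular point can project to any $P_i$, and $O$ remains the unique singularity (it is singular because the quartic has a triple point there).

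The routine parts of this argument are the explicit differentiation and the Euler bookkeeping. The one step that deserves care is the transversality claim, namely that $\nabla F_3(a)$ and $\nabla F_4(a)$ remain linearly independent at every $P_i$ for all $a\notin D$; I expect this to be the main obstacle. I would discharge it either by the B\'ezout-plus-distinctness reasoning above, or, more explicitly and in the computational spirit of the paper, by verifying that the $2\times 2$ minors of the Jacobian of $\bigl(F_3(a),F_4(a)\bigr)$ do not vanish simultaneously at the $P_i$ unless $a\in D$.
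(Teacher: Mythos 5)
Your proof is correct, but it follows a genuinely different route from the paper's. The paper argues geometrically that any singular point $T \neq O$ forces the line $O+T$ to lie on $Q$, so singular points must sit on the twelve lines $O + P_i$; it then disposes of these candidates by a computer-algebra certificate: substituting the parametrized point $O + \lambda P_i$ into the gradient of $Q$ and saturating the resulting ideal with respect to $a(a-1)(a+1)(a+2)(a+1/2)\,b$ always yields the ideal $(\lambda^2)$, so $\lambda = 0$ is the only solution. You reach the same intermediate localization by a different means (Euler's identity applied to the partials, giving $F_3 = F_4 = 0$ at the projection), and then you replace the paper's saturation computation entirely with a conceptual argument: B\'ezout plus the distinctness of the twelve points forces each intersection multiplicity to be $1$, hence transversality, hence linear independence of $\nabla F_3(P_i)$ and $\nabla F_4(P_i)$, which is incompatible with the relation $t_0 \nabla F_3(P_i) + b\,\nabla F_4(P_i) = 0$ when $b \neq 0$. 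Your version buys an explanation of \emph{why} the statement holds (smoothness away from the vertex is exactly transversality of the base locus $F_3 \cap F_4$), and it generalizes verbatim to any monoid $tF_d + F_{d+1}$ whose base curves meet in $d(d+1)$ distinct points; the paper's version buys uniformity with its computational methodology and produces the degenerate parameter values mechanically. Two small points to tighten: (i) you verify non-divisibility of $F_3(a)$ by only one of the four linear factors of $F_4(a)$ (the factor $y$) and assert the rest --- either complete the remaining three restrictions, or argue configurationally that a common line would force $F_3 = r_j \cdot C$ with $C$ a conic through the nine residual points, impossible since those nine contain collinear triplets (excluding an irreducible $C$) while no four of the twelve points are collinear (excluding a reducible $C$); (ii) B\'ezout should formally be invoked over the algebraic closure of $K$, from which linear independence of the gradients descends to $K$ --- harmless, but worth a word.
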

\begin{proof}
  If $T$ is a singular point of $Q = Q(a, b)$ different from the origin $O$,
  then the line
  joining $O$ and $T$ is contained in $Q$, therefore every singular
  point of $Q$ lies on one of the $12$ lines of $Q$ passing through
  the origin.  Hence we have to look for singular points among the points
  $T_\lambda^{(i)} = O + \lambda P_i$ (where $\lambda$ is a parameter).
  If we substitute $T_\lambda^{(i)}$ into the gradient of $Q$ and we saturate
  the ideal w.r.t.\ the polynomial $a(a-1)(a+1)(a+2)(a+1/2)b$ we
  always get the ideal $(\lambda^2)$ which means that $T_\lambda^{(i)}$
  cannot be singular. 
\end{proof}

\section{Convergent sextuples}

In this section we consider a special configuration of points.
\begin{defn}\label{CS}
Let $\mathcal{E} = (E_0, \dots, E_5)$ be six points in 
$\mathbb{P}^3$ satisfying the following properties.
\begin{itemize}
\item All the points are distinct and all are different from the origin $O$;
\item The three lines $E_0+E_1$, $E_2+E_3$, $E_4+E_5$ meet in a common,
  new point $A$;
\item There are no other collinearities among the points;
\item The six points are not contained in a plane.
\end{itemize}
The sextuple $\mathcal{E}$ will be called a \emph{convergent sextuple}.
\end{defn}
A simple example of a convergent sextuple is the following:
\begin{eqnarray}
\left((1, 0, 0, 1),\;
 (2, 0, 0, 1),\;
 (0, 1, 0, 1),\;
 (0, 2, 0, 1),\;
 (0, 0, 1, 1),\;
(0, 0, 2, 1) \right)
\label{csBase}
\end{eqnarray}

Convergent sextuples fulfil the following property.
\begin{prop}
If $\mathcal{E}$ and $\mathcal{F} = (F_0, \dots, F_5)$ are 
two convergent sextuples, then there exists precisely
one projectivity which sends $E_i$ to $F_i$, $i = 0, \dots, 5$.
\label{unicaProj}
\end{prop}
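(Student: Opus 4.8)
The plan is to translate the statement into linear algebra by attaching to each convergent sextuple a basis of $K^4$ dictated by its geometry. The key observation is that a convergent sextuple singles out such a basis almost canonically. First I would fix a representative vector $a \in K^4$ of the convergence point $A$, and on each of the three lines $\ell_1 = E_0+E_1$, $\ell_2 = E_2+E_3$, $\ell_3 = E_4+E_5$ pick representatives $v_1, v_2, v_3$ of $E_0, E_2, E_4$ respectively. Since every one of the six points lies in the projective subspace spanned by $a, v_1, v_2, v_3$, and these points are by hypothesis not coplanar, the four vectors cannot be linearly dependent; hence $\{a, v_1, v_2, v_3\}$ is a basis of $K^4$. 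I would then normalise: on $\ell_1$ the three points $A, E_0, E_1$ are distinct, so $E_1 = [\alpha a + \beta v_1]$ with $\alpha \neq 0$ (otherwise $E_1 = A$) and $\beta \neq 0$ (otherwise $E_1 = E_0$); rescaling $v_1$ I may assume $E_1 = [a + v_1]$, and likewise $E_3 = [a + v_2]$, $E_5 = [a + v_3]$.

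For \emph{existence} I would perform this construction for both $\mathcal{E}$ and $\mathcal{F}$, obtaining adapted bases $\{a, v_1, v_2, v_3\}$ and $\{a', v_1', v_2', v_3'\}$. The linear isomorphism $T \colon K^4 \to K^4$ determined by $a \mapsto a'$ and $v_i \mapsto v_i'$ is invertible, since it sends a basis to a basis, so it induces a projectivity. By construction this projectivity maps $A \mapsto A'$, $E_0 = [v_1] \mapsto [v_1'] = F_0$, $E_1 = [a+v_1] \mapsto [a'+v_1'] = F_1$, and analogously $E_i \mapsto F_i$ for every index.

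For \emph{uniqueness} it suffices to show that the only projectivity fixing all of $E_0, \dots, E_5$ is the identity, because if $g$ and $h$ both send $E_i \mapsto F_i$ then $h^{-1}g$ fixes each $E_i$. A projectivity $\varphi$ fixing $E_0, E_1$ fixes the line $\ell_1$ setwise, and likewise $\ell_2$; as these lines are distinct (again by non-coplanarity) and meet only at $A$, it follows that $\varphi(A) = A$. Working in the adapted basis of $\mathcal{E}$, the conditions $\varphi(A)=A$, $\varphi(E_0)=E_0$, $\varphi(E_2)=E_2$, $\varphi(E_4)=E_4$ make $a, v_1, v_2, v_3$ eigenvectors of a representing matrix $G$, so $G = \mathrm{diag}(c_0, c_1, c_2, c_3)$. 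Imposing $\varphi(E_1)=E_1$ forces $c_0 a + c_1 v_1$ to be proportional to $a + v_1$, hence $c_0 = c_1$; the conditions on $E_3$ and $E_5$ give $c_0 = c_2$ and $c_0 = c_3$. Therefore $G$ is scalar and $\varphi$ is the identity.

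The main point of the whole argument is the opening observation that non-coplanarity forces $\{a, v_1, v_2, v_3\}$ to be a basis; once this is secured, both halves collapse to the elementary fact that a projectivity of $\mathbb{P}^3$ is pinned down by its action on a projective frame. I do not anticipate a genuine obstacle: the only delicate point is the normalisation step, which uses precisely that $A, E_0, E_1$ (and the two analogous triples) are three \emph{distinct} collinear points. It is worth noting that the remaining hypothesis ``no other collinearities'' is not actually needed for this proposition, beyond ensuring that the paired structure defining $A$ is meaningful.
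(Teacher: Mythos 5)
Your proof is correct, but it follows a genuinely different route from the paper's. The paper argues synthetically: it constructs auxiliary points $R_1 = (E_0+E_3)\cap(E_1+E_2)$, $R_2 = (E_0+E_5)\cap(E_1+E_4)$, $R_3 = (E_2+E_5)\cap(E_3+E_4)$, asserts that the lines $R_1+E_4$, $R_2+E_2$, $R_3+E_0$ are concurrent in a new point $A_1$, and then applies the fundamental theorem of projectivities to the five points $A, E_0, E_2, E_4, A_1$ (in general position), finally asserting that the resulting projectivity also carries $E_1, E_3, E_5$ to $F_1, F_3, F_5$. You instead use the other standard form of the same fundamental theorem --- uniqueness of the projectivity matching two projective frames --- by showing that a convergent sextuple canonically determines a frame: representatives $a, v_1, v_2, v_3$ of $A, E_0, E_2, E_4$ form a basis (non-coplanarity), normalised so that $E_1, E_3, E_5$ become the ``unit points'' $[a+v_i]$ on the three lines; existence is then basis-to-basis, and uniqueness is the diagonalisation argument. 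What your approach buys is self-containedness: it avoids the two incidence claims the paper leaves unproved (``it is possible to verify'', ``it is easy to see''), which are presumably checked by computation. What the paper's approach buys is the extra geometric structure of convergent sextuples (the points $A_1$, $A_2$), which the paper exploits in the Remark immediately following the proposition (the collinearity of $A, A_1, A_2$). Two small points: your parenthetical justifications in the normalisation step are swapped ($\alpha = 0$ would give $E_1 = E_0$, while $\beta = 0$ would give $E_1 = A$); the conclusion $\alpha\beta \neq 0$ is nonetheless correct, using that $A$ is a \emph{new} point, distinct from the $E_i$, as the definition requires. And your closing observation is right: ``no other collinearities'' is never used, since non-coplanarity already forces the three concurrent lines to be distinct and $E_0, E_2, E_4$ to be non-collinear.
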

\begin{proof}
  We call $R_1$ the intersection point of the lines $E_0 + E_3$
  with $E_1+E_2$,  $R_2$ the intersection point of the lines
  $E_0 + E_5$ with $E_1+E_4$, and $R_3$ the intersection point of
  $E_2+E_5$ with $E_3+E_4$. Then it is possible to verify that
  the lines $R_1+E_4$, $R_2+E_2$ and
  $R_3+E_0$ meet in a common point $A_1$. Let $A$ be the intersection
  point of the lines $E_0+E_1$, $E_2+E_3$, $E_4+E_5$
  as in Definition \ref{CS}. Then the five points
  $A$, $E_0$, $E_2$, $E_4$, $A_1$
  are in generic position. Let now $S_1 = (F_0+F_3) \cap (F_1+F_2)$,
  $S_2 = (F_0+F_5) \cap (F_1 + F_4)$ and $S_3 = (F_2+F_5)\cap (F_3+F_4)$.
  Then, as above, the lines  $S_1+F_4$, $S_2+F_2$ and
  $S_3+F_0$ meet in a common point $B_1$. Let $B$ the intersection point
  of $F_0+F_1$, $F_2+F_3$, $F_4+F_5$. Also now the points $B$, $F_0$,
  $F_2$, $F_4$, $B_1$ are in generic position, then, from the
  fundamental theorem of projectivities, there exists exactly one
  proiectively which sends $A$ to $B$, $E_0$ to $F_0$, $E_2$ to $F_2$,
  $E_3$ to $F_3$ and $A_1$ to $B_1$. It is easy to see that, consequently,
  $E_1, E_3, E_5$ are also sent to, respectively, $F_1, F_3, F_5$.
\end{proof}

\begin{rem}
  It is worthy of note a nice geometric property of a convergent sextuple:
  if we compute in the same way the point
  $A_2$ as the intersection of the three lines $R_1+E_5$, $R_2+E_3$ and
  $R_3+E_1$, then the points $A$, $A_1$, $A_2$ are collinear. 
\end{rem}

In every quartic surface $Q$ defined by~(\ref{Qab})
we can find convergent sextuples $\mathcal{E}$ with the further
condition that 
the six lines $(E_0+E_1, E_2+E_3, E_4+E_5, E_0+E_2, 
E_3+E_5, E_1+E_4)$ are contained in the surface. To see this, we can
proceed as follows. 
Take three points $P_i$, $P_j$, $P_k$ among the $12$ points of intersection
of $F_3=0$ and $F_4=0$, with the 
following properties:
\begin{itemize}
\item $P_i$, $P_j$, $P_k$ are not collinear (i.e. $(i, j, k)$ is not in 
the list~(\ref{19all}));
\item for each of the three couples $(P_i, P_j)$, $(P_i, P_k)$, $(P_j, P_k)$
we can find a point $P_{ij}$, $P_{ik}$, $P_{jk}$ such that the 
triplets $(P_i, P_j, P_{ij})$, $(P_i, P_k, P_{ik})$, $(P_j, P_k, P_{jk})$
are in~(\ref{19all}).
\end{itemize}
The plane $O+P_i+P_j$ intersects $Q$ along the three lines $O+P_i$, 
$O+P_j$, $O+P_{ij}$ and a further line $\ell_{ij}$ which does not pass through
$O$. Similarly, we find a line $\ell_{ik}$ and a line $\ell_{jk}$ of $Q$. 
Then a convergent sextuple is given by the points: 
\begin{equation}
\begin{array}{ll}
E_0 = (O+P_i) \cap  \ell_{ij}, & E_1 = (O+P_i) \cap \ell_{ik},
\\ 
E_2 = (O+P_j) \cap \ell_{ij}, & E_3 = (O+P_j) \cap \ell_{jk}, \\
E_4 = (O+P_k) \cap \ell_{ik}, & E_5 = (O+P_k) \cap \ell_{jk}.
\end{array}
\label{sexExc}
\end{equation}
A convergent sextuple of this form is called a \emph{standard
  convergent sextuple}. 
For example, we can choose for $i$, $j$, $k$,  the indices $0$, $1$, $3$
(in this order). In this 
case the points $E_0, \dots, E_5$ are:
\begin{equation}
\begin{array}{l}
  (0, 0, 1, 0), \; (0, 0, 1, 4ab), \; (1, 0, 1, 0),\;
  (1, 0, 1, -(a + 1)b),\\
  (0, 1, 1, 2(2a + 1)b), \;(0, 1, 1, (2a + 1)b)
\end{array}
\label{bcs013}
\end{equation}

The relevance of the standard convergent sextuples is given by the following
proposition, whose proof is an immediate consequence of
Proposition~\ref{unicaProj}.

\begin{prop}
  Suppose we have two quartic surfaces $Q = Q(a, b)$ and $Q' = Q(a', b')$
  of the family~(\ref{Qab}), for
  $a, a', b, b' \in K$. Then the two quartic surfaces are
  projectively equivalent if and only if we can find a standard convergent
  sextuple $\mathcal{E}$ on $Q$ and a standard convergent sextuple $\mathcal{E}'$
  on $Q'$ such that the projectivity which sends $\mathcal{E}$ into
  $\mathcal{E}'$ (according to Proposition~\ref{unicaProj}) sends $Q$ to
  $Q'$.
\end{prop}

Let us point out another consequence of
  Proposition~\ref{unicaProj}. Fix
a convergent sextuple $\mathcal{E}_0$, as in~(\ref{csBase}), and take a
basic convergent sextuple $\mathcal{E}$ (as in~(\ref{bcs013}))
on the quartic surface $Q(a, b)$. Then the projectivity which sends
$\mathcal{E}$ to $\mathcal{E}_0$ transforms the quartic monoid
$Q$ into a quartic monoid
passing through the convergent sextuple~(\ref{csBase}). In other
words, we can assume that all the quartic monoids pass
through the sextuple~(\ref{csBase}),
and, moreover, that such a convergent sextuple  is a standard one.

The relevant fact is that in this way the equation of
the transformed quartic monoid has a new equation which does not contain
the parameter $b$ (as soon as we assume it is not zero). Therefore, all
quartic surfaces of~(\ref{Qab}) obtained by different
values of $b$ are projectively equivalent. 

Here is the family of polynomials giving the quartic
monoids  for which~(\ref{csBase}) is a basic convergent sextuple:
\begin{eqnarray}
  \label{Qa}
  Q(a) & = & 2ax^3y+ 4ax^2y^2+ 2axy^3 -(2a - 1)(a - 2)x^3z+\\ \nonumber
  & & -(5a^2 - 17a + 5)x^2yz -3(a^2 - 4a + 1)xy^2z + ay^3z+\\ \nonumber
  & & -3(2a - 1)(a - 2)x^2z^2 -(7a^2 - 19a + 7)xyz^2+ 2ay^2z^2+\\ \nonumber
  & & -2(2a - 1)(a - 2)xz^3 +ayz^3 -2atx^2y-2atxy^2+ 2(2a - 1)(a - 2)tx^2z+ \\ \nonumber
  & & 4(a^2 - 3a + 1)txyz -2aty^2z+ 2(2a - 1)(a - 2)txz^2 -2atyz^2 \nonumber
\end{eqnarray}
The polynomial  $Q(a)$  is obtained from the basic convergent
sextuple of~(\ref{Qab})
constructed from the triplet $(11, 10, 9)$, since this choice allows us to
obtain a simpler equation for~$Q(a)$. Clearly, we assume
$a \not \in D= \{-1, 0, 1/2, 1, 2 \}$
so that the surface $Q(a)$ is smooth outside the origin and its $31$ lines
do not degenerate.
We have therefore the following:

\begin{thm}
  Let $K$ be a field of characteristic zero and let
  $a\not \in \{-1, 0, 1/2, 1, 2 \}$. Then
  all quartic monoid surfaces of $\mathbb{P}^3_K$  with no other
  singularities outside of the the triple point
  and with the maximum number of lines
  are projectively equivalent to a surface of equation
   (\ref{Qa}).
  \label{teor1}
\end{thm}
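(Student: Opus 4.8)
The plan is to assemble the pieces that the excerpt has already laid out, turning the chain of lemmas into a clean equivalence. The starting point is the structural analysis of the second section: any quartic monoid with a triple point at the origin has the form $tF_3 + F_4$, and by the projection-from-$O$ argument the count of $31$ lines forces exactly $19$ collinear triplets among the $12$ intersection points $P_0,\dots,P_{11}$ of $F_3=0$ and $F_4=0$. Since Rohn's combinatorial maximum of $19$ is realized uniquely up to relabeling by the configuration~(\ref{19all}), I would first record that, up to a projectivity of the plane $\pi$, the $12$ points can be normalized to the coordinates fixed in the excerpt (with $P_0,\dots,P_4$ pinned and the remainder written in terms of the single parameter $a$). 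The two lemmas then identify $F_3=F_3(a)$ as the unique cubic through the $12$ points and $\mathcal{L}_a$ as a $4$-dimensional linear system whose only member splitting into four lines through the required collinear triplets is $F_4(a)$. Combining these gives the provisional normal form $tF_3(a) + (\alpha_0 x + \alpha_1 y + \alpha_2 z)F_3(a) + b F_4(a) = 0$.

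Next I would eliminate the spurious parameters. The linear term $(\alpha_0 x + \alpha_1 y + \alpha_2 z)F_3(a)$ is absorbed by the coordinate change $t \mapsto t - \alpha_0 x - \alpha_1 y - \alpha_2 z$, which preserves the monoid structure and the triple point at $O$, reducing the form to $Q(a,b) = tF_3(a) + bF_4(a)$ as in~(\ref{Qab}). To remove $b$ as well, I would invoke the convergent-sextuple machinery of the third section: every surface $Q(a,b)$ carries a standard convergent sextuple (for instance the one built from a fixed triplet such as $(11,10,9)$), and by Proposition~\ref{unicaProj} there is a unique projectivity sending that sextuple to the fixed base sextuple~(\ref{csBase}). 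Applying this projectivity transforms $Q(a,b)$ into a surface in the normalized family~(\ref{Qa}); the key observation, already noted in the excerpt, is that this transformed equation is independent of $b$ once $b\neq 0$. Hence all choices of $b$ collapse to a single surface $Q(a)$, and I would verify the independence of~(\ref{Qa}) from $b$ by inspecting the explicit transformed polynomial.

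Finally I would close the loop by confirming that~(\ref{Qa}) genuinely lands inside the admissible family and has no extra singularities. The Proposition on singular points guarantees that for $a\notin D$ and $b\neq 0$ the only singular point of~(\ref{Qab}) is the origin; since a projectivity preserves the singular locus and the line configuration, $Q(a)$ is smooth away from $O$ and retains all $31$ lines. The logical structure is therefore: an arbitrary quartic monoid with no singularities beyond the triple point and with $31$ lines $\Rightarrow$ (uniqueness of the $19$-collinearity configuration and the plane normalization) the normal form depending on $a,\alpha_i,b$ $\Rightarrow$ (coordinate shift) the form $Q(a,b)$ $\Rightarrow$ (convergent-sextuple projectivity) the $b$-free form $Q(a)$, with $a\notin D$ forced by the distinctness and non-degeneracy conditions of the first Lemma.

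The main obstacle I anticipate is justifying the \emph{uniqueness} claims rather than mere existence: specifically, that Rohn's bound of $19$ collinear triplets is attained only by the configuration~(\ref{19all}) up to relabeling, and that the planar normalization of the first five points together with the two collinearity-plus-cubic constraints really pins down the remaining seven points to the stated one-parameter family with no further freedom. Each of these rests on a finite but substantial computation (the combinatorial enumeration of triplet configurations and the rank/elimination analysis of the coordinate constraints), which the authors delegate to computer algebra; in a self-contained proof one would need to either cite~\cite{blt} for the combinatorial classification or reproduce the symbolic elimination showing the parametrization is exhaustive. The remaining steps — the coordinate shift and the $b$-elimination via Proposition~\ref{unicaProj} — are essentially formal once the normal form is in hand.
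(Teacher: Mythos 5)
Your proposal is correct and follows essentially the same route as the paper: the normal form $tF_3(a)+(\alpha_0x+\alpha_1y+\alpha_2z)F_3(a)+bF_4(a)$ from the two lemmas, the coordinate shift in $t$ to reach $Q(a,b)$, and the elimination of $b$ via the unique projectivity (Proposition~\ref{unicaProj}) carrying a standard convergent sextuple to the base sextuple~(\ref{csBase}), exactly as in Sections~2 and~3. You also correctly identify where the argument leans on finite symbolic computations (uniqueness of the configuration~(\ref{19all}) and the one-parameter pinning of the points), which is precisely what the authors delegate to computer algebra.
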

Finally, the points $\bar{P}_0, \dots, \bar{P}_{11}$ of
intersection of the $12$ lines of $Q(a)$ passing through
the origin with the plane $t=0$ have now the following coordinates:
\begin{equation}
  \begin{array}{l}
    (0, 1, -1, 0),\ (1, -1, 0, 0),\ (1, 0, -1, 0),\ (1, 2a - 1, -a, 0),
    \ (1, a - 2, 1, 0),\\
    (a, -2a + 1, -1, 0),\ (1, a - 2, -a, 0), \ (a, -a + 2, -1, 0), \ 
    (a, -2a + 1, a, 0),\\ \ (0, 0, 1, 0), \ (0, 1, 0, 0),\ (1, 0, 0, 0)
  \end{array}
  \label{Pbar}
\end{equation}
They still respect the collinearities given by (\ref{19all}).

\section{Stabilisers}

Starting from relations~(\ref{19all}), it is possible to construct
all the triplets of points $(i, j, k)$
with the property introduced in the previous section,
i.e.\ such that the sextuple $\mathcal{E}$ given by~(\ref{sexExc})
is standard and convergent in the sense of Definition \ref{CS}.
Note that the number of the possible triplets
is $720$ (indeed, any set of points $\{i, j, k\}$ gives six standard
convergent sextuples,
one for each permutation of the indices $i, j, k$).

Let $\mathcal{M} \in \mathrm{PGL}\,(4, K)$.
We consider the natural action of the matrix $\mathcal{M}$
on $\mathbb{P}^3_K$:
given a point $R$ in $\mathbb{P}^3_K$ of
coordinates $(u_0, u_1, u_2, u_3)$, we
send it to the point $\mathcal{M}\cdot R$, whose coordinates are
(the transpose
of) $\mathcal{M} \cdot {}^t(u_0, u_1, u_2, u_3)$.
Hence $\mathrm{PGL}\,(4, K)$ also acts on
homogeneous polynomials in a natural way: if $F(x, y, z, t)$ is a
homogeneous polynomial, then $\mathcal{M}\cdot F$ is the polynomial
$F\left(\mathcal{M}^{-1} \cdot {}^t(x, y, z, t)\right)$.
According to this definition, if a
point $R\in \mathbb{P}^3$
is a zero of $F$, then the point $\mathcal{M}\cdot R$ is a zero of
$\mathcal{M} \cdot F$. 

Let us consider the following problem: 
given a quartic monoid $Q(a)$ ($a \in K \setminus D$),
compute its stabiliser w.r.t.\ the action of
$\mathrm{PGL}\,(4, K)$, i.e.\ the group
(in the following, by $\mathcal{M} \cdot Q(a) = Q(a)$
we mean that the 
surfaces defined  by the equations $Q(a)=0$ and 
  $\mathcal{M}\cdot Q(a) = 0$, respectively,  coincide)
\[
  G_a = \mathrm{Stab}_{\mathrm{PGL}(4, K)}Q(a) = 
\{\mathcal{M} \in \mathrm{PGL}\, (4, K) \ | \
    \mathcal{M} \cdot Q(a) = Q(a)\}
\]
The quartic monoid surface $Q(a)$ has the convergent sextuple
$\mathcal{E}_0$ as in~(\ref{csBase}) among
its standard convergent sextuples. If $\mathcal{E}$ is
another standard convergent
sextuple of $Q(a)$, let $\mathcal{M}$ be the unique (up to a
multiplicative constant) matrix such that
$\mathcal{M} \cdot \mathcal{E}_0 = \mathcal{E}$ (see
Proposition~\ref{unicaProj}).
If we compute $Q_\mathcal{M} = \mathcal{M}\cdot Q(a)$, we
have to select those matrices $\mathcal{M}$ such that
$Q_\mathcal{M}$ and $Q(a)$ define the same surface
(moreover, we can also check if there are some
specific values of $a$ such that for that specific value
$Q_\mathcal{M}$ and $Q(a)$ coincide).
Of course, this construction has to
be repeated for the $720$ standard convergent sextuples $\mathcal{E}$
of $Q(a)$.
The computations are not too hard; the obvious way to see when
$Q_\mathcal{M}$ and $Q(a)$ define the same surface, is to construct the
$2\times 35$ matrix $W = (w_{rs})$ whose rows are, respectively,
the coefficients of $Q_\mathcal{M}$
and of $Q(a)$ w.r.t.\ the $35$ monomials of degree $4$ in the variables
$x, y, z, t$. The two surfaces coincide if and only if the rank of
this matrix is one. In principle, the number of minors ($595$) might
seem problematic, but the computation can
greatly be reduced by the following observations: (i) in the matrix $W$
we can erase all the columns in which both entries are 0;
(ii) if in the matrix $W$
the element $w_{1s}$ is not zero but the element $W_{2s}$ is zero,
then the two surfaces cannot coincide and no other computations
are needed; (iii) if $w_{1s} = 0$,
then among the equations which test the coincidence of the two surfaces,
we have to add $w_{2s}=0$.

The following result holds true.

\begin{thm} If $a\in K \setminus \{-1, 0, 1/2, 1, 2\}$ is not a root
  of $x^2-x+1=0$,
  then the stabiliser $G_a$ of $Q(a)$ is isomorphic to the group $S_3$,
  and is generated by the following two matrices:
\[
\left(\begin{array}{rrrr}
2 & 2 & \phantom{-}2 & 0 \\
0 & -2 & 0 & 0 \\
-2 & 0 & 0 & 0 \\
0 & 1 & 3 & -2
\end{array}\right), \qquad
\left(\begin{array}{rrrr}
2 & 2 & 2 & \phantom{-}0 \\
0 & -2 & 0 & 0 \\
0 & 0 & -2 & 0 \\
0 & -2 & -3 & 2
\end{array}\right)
\]
If $a=\varepsilon$ is a solution of the equation $x^2-x+1=0$, then
the equation of the quartic monoid surface $Q(\varepsilon)$
becomes $Q'=0$ where:
\begin{eqnarray*}
  Q' & = & x^3y + 2x^2y^2 + xy^3 + 3/2x^3z + 6x^2yz + 9/2xy^2z+ 1/2y^3z
  + 9/2x^2z^2\\
  & & + 6xyz^2 + y^2z^2 + 3xz^3 + 1/2yz^3
  - x^2yt - xy^2t - 3x^2zt - 4xyzt\\
  & & - y^2zt - 3xz^2t - yz^2t
\end{eqnarray*}
In this case, its stabiliser $G_{\epsilon}$
has order $18$, is isomorphic to
$S_3 \times C_3$
and is generated by the following two matrices:
\[
\left(\begin{array}{rrrr}
2 & 2 & 2 & \phantom{-}0 \\
0 & -2 & 0 & 0 \\
0 & 0 & -2 & 0 \\
0 & -2 & -3 & 2
\end{array}\right), \qquad
\left(\begin{array}{rrrr}
0 & 2 \epsilon - 2 & 2 \epsilon - 4 & 0 \\
0 & -4 \epsilon + 2 & 0 & 0 \\
2 \epsilon - 4 & 2 \epsilon - 2 & 0 & 0 \\
-3 & -3 \epsilon & -3 \epsilon & 4 \epsilon - 2
\end{array}\right)
\]
\label{th4.1}
\end{thm}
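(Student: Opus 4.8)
The plan is to use Proposition~\ref{unicaProj} to turn the computation of $G_a$ into a finite check. Any $\mathcal{M}\in G_a$ must preserve the entire projective configuration carried by $Q(a)$: it fixes the unique triple point $O$, it permutes the $31$ lines of $Q(a)$, hence it permutes the twelve lines through $O$ and the corresponding twelve points $\bar P_0,\dots,\bar P_{11}$ of~(\ref{Pbar}), preserving the collinearities~(\ref{19all}). Because a standard convergent sextuple is defined purely from this incidence data through~(\ref{sexExc}), and $\mathcal{E}_0$ is itself a standard convergent sextuple of $Q(a)$, the image $\mathcal{M}\cdot\mathcal{E}_0$ is again one of the $720$ standard convergent sextuples. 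By Proposition~\ref{unicaProj}, $\mathcal{M}$ is then the \emph{unique} projectivity carrying the ordered sextuple $\mathcal{E}_0$ to $\mathcal{M}\cdot\mathcal{E}_0$; thus $G_a$ injects into the finite set $\{\mathcal{M}_\mathcal{E}\}$, where $\mathcal{E}$ runs over the $720$ standard convergent sextuples and $\mathcal{M}_\mathcal{E}$ is the unique matrix sending $\mathcal{E}_0$ to $\mathcal{E}$. In particular $G_a=\{\mathcal{M}_\mathcal{E}:\mathcal{M}_\mathcal{E}\cdot Q(a)=Q(a)\}$.

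Next I would run the finite test. For each of the $720$ sextuples $\mathcal{E}$ I compute $\mathcal{M}_\mathcal{E}$ (whose entries are rational functions of $a$), form $\mathcal{M}_\mathcal{E}\cdot Q(a)$, and build the $2\times 35$ matrix $W$ of its coefficients together with those of $Q(a)$ against the quartic monomials; then $\mathcal{M}_\mathcal{E}\in G_a$ exactly when $\mathrm{rank}\,W=1$, which the reductions (i)--(iii) above turn into a small system of minor equations. Solving this system in $a$ is the computational heart, to be carried out in CoCoA and Sage. The expected outcome is that six of the candidate matrices stabilise $Q(a)$ for \emph{every} $a\notin D$, and that twelve further candidates stabilise it precisely when their minors acquire the single extra factor $a^2-a+1$; these extra symmetries therefore appear if and only if $a=\varepsilon$ is a root of $x^2-x+1=0$.

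I would then identify the two groups abstractly. The six generic matrices are constant (independent of $a$), so their stabilising property is a polynomial identity in $a$ and holds for all $a\notin D$; checking that they form a group and that the two displayed generators satisfy a presentation of $S_3$ (two elements of finite order whose product has order three) forces $G_a\cong S_3$ for every non-exceptional $a$. For $a=\varepsilon$ I would first substitute $\varepsilon^2=\varepsilon-1$ to confirm that $Q(\varepsilon)$ simplifies to the stated polynomial $Q'$, and then verify that the eighteen surviving matrices form a group of order $18$. To pin down its isomorphism type I would exhibit the additional $\varepsilon$-dependent generator as a \emph{central} element of order $3$ whose cyclic subgroup $C_3$ meets the above $S_3$ trivially; since $|S_3|\cdot|C_3|=18$ and $C_3$ is central, the group is the internal direct product $S_3\times C_3$.

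The main obstacle I anticipate is computational rather than conceptual: enumerating the $720$ standard convergent sextuples, producing the parametric matrices $\mathcal{M}_\mathcal{E}$ together with their minors, and isolating the factor $a^2-a+1$ cleanly enough to be sure that no value $a\notin D$ outside the roots of $x^2-x+1$ contributes extra stabiliser elements. A subtler point, which I would state explicitly, is that the genericity conclusion ``$G_a\cong S_3$ for all non-exceptional $a$'' relies on the six core matrices being independent of $a$, whereas the remaining $714$ candidates fail to stabilise on the complement of the proper closed set $\{a^2-a+1=0\}\cup D$.
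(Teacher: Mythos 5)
Your proposal is correct and takes essentially the same route as the paper: reduce the stabiliser computation, via Proposition~\ref{unicaProj}, to the finite set of $720$ candidate matrices attached to the standard convergent sextuples, then test each candidate by the rank-one condition on the $2\times 35$ coefficient matrix, with the exceptional factor $a^2-a+1$ accounting for the enlarged stabiliser $S_3\times C_3$. Your opening paragraph in fact makes explicit the incidence-preservation argument (that any stabilising projectivity fixes $O$, permutes the $31$ lines, and hence sends $\mathcal{E}_0$ to another standard convergent sextuple) which the paper leaves implicit.
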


\medskip
We finally complete Theorem \ref{teor1} answering the
following question: how many vales of $a$ in (\ref{Qa}) give the same
quartic monoid up to a projectivity?

We can answer the question by mean of computations quite similar to
the previous ones. Let $\mathcal{E}_0$ be the convergent sextuples
given by
(\ref{csBase}) (remember that $\mathcal{E}_0$
is a standard convergent sextuple
for all the surfaces of the family (\ref{Qa})). If $Q(a)$ and $Q(b)$
are two surfaces of the family (\ref{Qa}) which are projectively equivalent,
then there exists a standard convergent sextuple $\mathcal{E}$ of $Q(a)$
such that $\mathcal{M} \cdot Q(a)$ and $Q(b)$ coincide, where $\mathcal{M}$
is the $4\times 4$ matrix which sends $\mathcal{E}$ into $\mathcal{E}_0$.

By repeating the above computation for all the $720$ standard convergent
sextuples and by selecting the cases leading to a positive answer
to the question, we get the following result.

\begin{thm} Let $a, b \in K \setminus \{-1, 0, 1/2, 1, 2\}$
  be two elements and consider
  the two quartic surfaces $Q(a)$ and $Q(b)$ from the family (\ref{Qa}). 
  Then $Q(a)$ and $Q(b)$ are projectively equivalent if and only if $a$
  has one of the following values:
  \begin{equation}
    b, \quad \frac{1}{b}, \quad \frac{1}{1-b}, \quad  \frac{b}{b-1},
    \quad 1-b, \quad \frac{b-1}{b}
  \label{jinv}
  \end{equation}
\end{thm}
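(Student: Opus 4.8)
The plan is to recognize the six values in (\ref{jinv}) as the orbit of $b$ under the anharmonic group, the group of the six fractional linear transformations permuting $\{0,1,\infty\}$, which is exactly the symmetry group governing the $j$-invariant of four points on a line (or of a plane cubic). This immediately suggests that the parameter $a$ is essentially a cross-ratio attached to the configuration, and that the theorem is the statement that $Q(a)\cong Q(b)$ precisely when $a$ and $b$ define the same $j$-invariant. So I would first make this identification explicit: I would exhibit the map from the parameter $a$ to a genuine cross-ratio of four of the twelve points (or of four of the special lines) of the configuration, and check that reordering the points — which is forced by the combinatorial symmetry of the $19$ collinearities in (\ref{19all}) — acts on $a$ by precisely the six transformations of (\ref{jinv}).

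Concretely, I would proceed computationally along the lines already set up in the paper. By Proposition~\ref{unicaProj} and the discussion preceding the statement, $Q(a)$ and $Q(b)$ are projectively equivalent if and only if there is a standard convergent sextuple $\mathcal{E}$ of $Q(a)$ such that the unique matrix $\mathcal{M}$ carrying $\mathcal{E}$ to the fixed sextuple $\mathcal{E}_0$ of (\ref{csBase}) sends $Q(a)$ to $Q(b)$. First I would enumerate the $720$ standard convergent sextuples of $Q(a)$, exactly as in the proof of Theorem~\ref{th4.1}; for each I would compute the corresponding matrix $\mathcal{M}$ (whose entries are rational functions in $a$) and form $\mathcal{M}\cdot Q(a)$. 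Next, comparing $\mathcal{M}\cdot Q(a)$ against the generic family member $Q(b)$ monomial-by-monomial — the same $2\times 35$ coefficient matrix $W$ with the rank-one test and the minor-reduction tricks (i), (ii), (iii) described above — yields, for each sextuple, a polynomial condition relating $a$ and $b$. I would then collect the resulting relations and verify that the set of solutions $b$ obtained as $\mathcal{E}$ ranges over all $720$ sextuples is exactly the six-element set (\ref{jinv}), simultaneously checking that each of these six values genuinely arises (so the condition is necessary \emph{and} sufficient).

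For the necessity direction I would argue that if $\mathcal{M}\cdot Q(a)$ equals some $Q(b)$ then $\mathcal{M}$ must permute the twelve points $\bar P_0,\dots,\bar P_{11}$ of (\ref{Pbar}) of one surface with those of the other, respecting the collinearity pattern (\ref{19all}); since that pattern is rigid (unique up to relabelling, as noted after (\ref{19all})), the induced permutation lies in the automorphism group of the incidence structure, and pushing this through the coordinate formulas forces $b$ into the list (\ref{jinv}). For sufficiency I would directly produce, for each of the six values, the explicit projectivity realizing the equivalence — this falls out of the enumeration above, since each value of (\ref{jinv}) is attained by at least one of the $720$ sextuples.

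The main obstacle will be organizing and controlling the symbolic computation so that the elimination is decisive: the entries of $\mathcal{M}$ are rational functions of $a$, so the coincidence conditions are rational equations that must be cleared of denominators and interpreted modulo the excluded locus $D$ together with the degenerate locus $x^2-x+1=0$ singled out in Theorem~\ref{th4.1}. I expect the delicate point to be ensuring that no spurious solutions survive after saturating away these denominators, and that the final union of conditions collapses neatly to exactly the six values of (\ref{jinv}) with no value omitted and none spuriously added; this is precisely where the conceptual identification of $a$ with a cross-ratio serves as an independent check, since the anharmonic orbit is guaranteed to have size dividing six and the generic such orbit has size exactly six.
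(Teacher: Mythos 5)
Your proposal is correct and follows essentially the same route as the paper: enumerating the $720$ standard convergent sextuples, computing for each the matrix $\mathcal{M}$ carrying it to $\mathcal{E}_0$, and testing whether $\mathcal{M}\cdot Q(a)$ coincides with $Q(b)$ via the $2\times 35$ rank-one criterion, then collecting the resulting conditions on $a$ and $b$. The extra framing via the anharmonic group and the cross-ratio is a useful sanity check but does not change the substance of the argument, which matches the paper's computational proof.
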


\medskip
We conclude by noticing how conditions (\ref{jinv}) show that
the  action of
$\mathrm{PGL}\, (4, K)$ on quartic monoid surfaces
of $\mathbb{P}^3_K$ with maximum number of lines is quite similar
to the action of $\mathrm{PGL}\, (3, K)$ on cubic curves of
$\mathbb{P}^3_K$ (see \cite{JH}, Lecture 10).
In particular, two quartic monoid surfaces $Q(a)$ and $Q(b)$
are projectively equivalent if and only if $a$ and $b$ have the
same $j$-invariant.

\begin{rem}
  Among the lines of $Q(a)$, the lines $O + \bar{P}_2$,
  $O+\bar{P}_9$ and $O+\bar{P}_{11}$ can be
  distinguished from the others: they are the only three lines
  through the singular point of $Q(a)$ which intersect only $4$ other
  lines of $Q(a)$ (see (\ref{19all}) or figure \ref{figA}).
  The group $G_a$ corresponds to the permutations of
  these three lines.
  Analogously, the group $G_{\epsilon}$ corresponds to the permutations
  of the three lines above and the cyclic permutation of the three lines
  $\bar{P}_1 + \bar{P}_3 + \bar{P}_7$,
  $\bar{P}_0 + \bar{P}_5 + \bar{P}_6$,
  $\bar{P}_4 + \bar{P}_8 + \bar{P}_{10}$.  
\end{rem}

\bibliographystyle{plain}
\bibliography{paperBLT}

\begin{thebibliography}{10}

\bibitem{CoCoA-5}
J.~Abbott, A.~M. Bigatti, and L.~Robbiano.
\newblock {CoCoA}: a system for doing {C}omputations in {C}ommutative
  {A}lgebra.
\newblock Available at \texttt{http://cocoa.dima.unige.it}.

\bibitem{bcgm}
M.C. Beltrametti, E.~Carletti, D.~Gallarati, and D.~Monti~Bragadin.
\newblock {\em Lectures on Curves, Surfaces and Projective Varieties---A
  Classical View of Algebraic Geometry}.
\newblock European Mathematical Society, Textbooks in Mathematics, 9, Zurich,
  2009.
\newblock Translated by F. Sullivan.

\bibitem{blt}
M.C. Beltrametti, A.~Logar, and M.L. Torrente.
\newblock Complete intersection of cubic and quartic curves and a
  classification of the configurations of lines of quartic monoid surfaces,
  (2019).
\newblock In preparation.

\bibitem{cn}
F.~Conforto.
\newblock {\em Le superficie razionali}.
\newblock Zanichelli, 1939.

\bibitem{Deg}
A.I. Degtyarev.
\newblock Classification of quartic surfaces that have a nonsimple singular
  point.
\newblock {\em Math. USSR-Izv}, 35(3):607--627, 1990.

\bibitem{es}
C.M. Essop.
\newblock {\em Quartic surfaces with singular points}.
\newblock Cambridge Univ., Press, 1916.

\bibitem{gas}
V.~Gonzalez-Alonso and S.~Rams.
\newblock Counting lines on quartic surfaces.
\newblock {\em Taiwanese J. Math}, 20(4):769--785, 2016.

\bibitem{JH}
J.~Harris.
\newblock {\em Algebraic Geometry, a first course}.
\newblock Springer-Verlag, 1992.

\bibitem{jp}
P.H. Johansen, M.~Loberg, and R.~Piene.
\newblock Monoid hypersurfaces.
\newblock {\em Springer (ISBN 978-3-540-72184-0/hbk)}, pages 55--77, 2008.

\bibitem{pbpt}
I.~Polo-Blanco, M.~van~der Put, and J.~Top.
\newblock Ruled quartic surfaces, models and classification.
\newblock {\em Geom. Dedicata}, 150:151--180, 2011.

\bibitem{ROH}
K.~Rohn.
\newblock Ueber die fl{\"a}chen vierter {O}rdnung mit dreifachem {P}unkte.
\newblock {\em Math. Ann.}, 24:55--151, 1884.

\bibitem{sage}
W.A. Stein et~al.
\newblock {\em {S}age {M}athematics {S}oftware ({V}ersion 6.7)}.
\newblock The Sage Development Team, 2015.
\newblock {\tt http://www.sagemath.org}.

\bibitem{tk1}
T.~Takahashi, K.~Watanabe, and T.~Higuchi.
\newblock On the classification of quartic surfaces with triple point. {I}.
\newblock {\em Sci. Rep. Yokohama Nat. Univ.}, 29:47--70, 1982.

\bibitem{tk2}
T.~Takahashi, K.~Watanabe, and T.~Higuchi.
\newblock On the classification of quartic surfaces with triple point. {II}.
\newblock {\em Sci. Rep. Yokohama Nat. Univ.}, 29:71--94, 1982.

\end{thebibliography}

\end{document}